\numberwithin{equation}{section}
\newtheorem{Theorem}{Theorem}[section]
\newtheorem{Lemma}{Lemma}[section]
\newtheorem{Remark}{Remark}[section]
\newcounter{RomanNumber}
\renewcommand{\section}{\@startsection{section}{1}{0mm}
  {-\baselineskip}{0.5\baselineskip}{\bf\leftline}}
\renewcommand{\subsection}{\@startsection{subsection}{1}{0mm}
  {-\baselineskip}{0.5\baselineskip}{\leftline}}
\title{\large Mass- and energy-preserving exponential Runge-Kutta methods for the nonlinear Schr\"odinger equation}
\author{
\small Jin Cui$^{1}$, Zhuangzhi Xu$^2$, Yushun Wang$^2$  and Chaolong Jiang$^{3,4}$\thanks{Corresponding author. E-mail: chaolong\_jiang@126.com.}\\
\small $^1$Department of Basic Sciences, Nanjing Vocational College of Information Technology,\\
\small Nanjing 210023, China \\
\small  $^2$Jiangsu Key Laboratory for Numerical Simulation of Large Scale Complex Systems,\\
\small  School of Mathematical Sciences, Nanjing Normal University, Nanjing 210023, China\\
\small  $^3$Department of Mathematics, College of Liberal Arts and Science,\\
\small National University of Defense Technology, Changsha, 410073, PR China\\
\small $^4$School of Statistics and Mathematics,  Yunnan University of Finance and Economics,\\
\small Kunming 650221, China
}
\begin{document}

\date{}
\maketitle
\titleformat*{\section}{\centering}{\Large}
\titleformat*{\subsection}{\centering}{\large}

\begin{center}{Abstract} \end{center}

\indent In this paper, a family of arbitrarily high-order structure-preserving  exponential Runge-Kutta methods are developed for the nonlinear  Schr\"odinger equation by combining the scalar auxiliary variable approach with the exponential Runge-Kutta method. By introducing an auxiliary variable, we first transform the original model into an equivalent system which admits both mass and modified energy conservation laws. Then applying the Lawson method and the symplectic Runge-Kutta method in time, we derive  a class of mass- and energy-preserving time-discrete schemes which are arbitrarily high-order in time. Numerical experiments are addressed to demonstrate
the accuracy and effectiveness of the newly proposed schemes.

\noindent \textbf{AMS subject classifications:} 65M20, 65M10, 65M70 \\
\noindent \textbf{Key words:} Nonlinear Schr\"odinger equation, scalar auxiliary variable approach, high-order, conservative scheme. \\

\section{Introduction}
The nonlinear Schr\"odinger (NLS) equation is well-known in many fields of physics, such as quantum physics, plasma physics and nonlinear optics.
In this paper, we consider the following cubic NLS equation
 \begin{align}\label{eq1.1}
{\rm i} \partial_t\psi(x,y,t) =-\frac{1}{2}\Delta \psi(x,y,t)+\beta |\psi(x,y,t)|^2\psi(x,y,t), \ (x,y) \in \Omega,  \ 0 < t \leq T,
 \end{align}
subject to the $(l_1,l_2)$-periodic boundary condition
 \begin{align}\label{eq1.2}
\psi(x,y,t)=\psi(x+l_1,y,t), \ \psi(x,y,t)=\psi(x,y+l_2,t), \ (x,y) \in \Omega,  \ 0 < t \leq T,
 \end{align}
 and the initial condition
 \begin{align}\label{eq1.3}
\psi(x,y,0)=\psi_0(x,y), \ (x,y) \in \Omega,
 \end{align}
 where ${\rm i}\!=\!\sqrt{-1}$ is the complex unit, $t$ is the time variable, $x$ and $y$ are the spatial variables, $\psi(x,y,t)$ is the complex-valued wave function, $\Delta$ is the usual Laplace operator, $\beta$ is a given real constant, and ${\psi_0}(x,y)$ is a given  $(l_1,l_2)$-periodic complex-valued function. The initial-periodic boundary value problem~\eqref{eq1.1}-\eqref{eq1.3}
preserves the following mass and energy conservation laws, respectively,
\begin{align}\label{mass-conservation-law}
M(t):=\int_{\Omega}|{\psi}(x,y,t)|^2dxdy \equiv M(0),\ t\ge 0,
\end{align}
and
\begin{align}
H(t):=\int_{\Omega} \bigg [\frac{1}{2}|\nabla {\psi}(x,y,t)|^2 +\frac{\beta}{2} |\psi|^4 \bigg]dxdy \equiv H(0),\ t\ge 0,
\end{align}
where $\Omega=[0,l_1]\!\times\![0,l_2]$ and $|\nabla \psi|^2=|\psi_x|^2+|\psi_y|^2$.

It is well-known that mass and energy conservation laws are fundamental in the development of numerical schemes, and play a crucial role in the numerical analyses of the NLS equation. Thus, during the past decade,
 various conservative numerical methods which can preserve mass or energy are proposed for numerically solving the NLS equation,  such as the Crank-Nicolson method \cite{ABB13,BC13,gongyz17}, Hamiltonian boundary value method \cite{BBCIamc18}, partitioned averaged vector field method \cite{CLW17}, energy-preserving collocation method \cite{LWjcp2015}, discrete variational derivative method \cite{MF01}, prediction-correction method \cite{lixin} and so on. In recent years, there has been an increasing interest in structure-preserving exponential integrators for conservative or dissipative systems, because of their good stability, high accuracy and high efficiency. In Ref. \cite{CCO08fcm}, Celledoni et al. proposed a symmetric energy-preserving exponential integrators for the cubic Schr\"odinger equation by adopting the  symmetric  projection strategy. In Ref. \cite{LWjcp2015}, motivated by the ideas of exponential integrators and discrete gradients, Li and Wu  constructed a structure-preserving exponential scheme for general conservative or dissipative systems, which was thereafter revisited and generalized by Shen et al \cite{SL19jcp}. In Refs. \cite{DJLQsima19,JLQZmc18}, authors developed different energy-stable exponential integrators for gradient flows. More recently, Jiang et al.~\cite{Jiang2020jcp} designed a linearly implicit energy-preserving exponential integrator for the nonlinear Klein-Gordon equation by combining the scalar auxiliary variable (SAV) approach \cite{SXY18} and exponential integrators. For other structure-preserving exponential integrators, readers are referred to Refs. \cite{BM2017sisc,MW17jcp}. However, to our best knowledge, there has been no reference considering structure-preserving exponential schemes for the NLS eqaution, which can inherit the  properties of both mass and energy.

To meet such challenge,  in this paper, we focus on developing arbitrary high-order structure-preserving methods for the NLS eqaution, which can preserve the discrete mass and energy simultaneously. By introducing an auxiliary variable, we first recast the original model into an equivalent system which  admits both mass and modified energy conservation laws. Then a class of high-order semi-discrete exponential Runge-Kutta methods methods are obtained by using the Lawson method~\cite{Lawson1967} and symplectic Runge-Kutta (RK) method in time. We show that the resulting system can rigorously preserve the semi-discrete mass and modified energy, simultaneously. Numerical tests are presented to verify the theoretical analysis.

\section{Model reformulation}
For simplicity of notations, we define the $L^2$ inner product and its norm as $(f,g)=\int_{{\Omega}}^{}f {\bar g}d{\bf x}$ and $\Vert f \Vert=\sqrt{(f,f)},
\ \forall f,g \in L^2({\Omega})$, respectively, where $\bar{g}$ represents the conjugate of $g$. Denote the linear part of \eqref{eq1.1} as ${\mathcal L}\psi=-\frac{1}{2} \Delta \psi$ for simplicity, we then utilize the SAV idea to derive a SAV reformulation, by introducing an auxiliary variable
\begin{align*}
q(t)=\sqrt{(\psi^2,\psi^2)+C_0},
\end{align*}
where $C_0>0$ to make $q$ well-defined for all $\psi$. The energy functional can be rewritten as the following quadratic
form
\begin{align}\label{modified-energy-conservation-law}
E({t})=( {\mathcal L}\psi, \psi)+\frac{\beta}{2}q^2-\frac{\beta}{2}C_0.
\end{align}
Subsequently, according to the energy variational principle, the original system \eqref{eq1.1} is equivalent to the following SAV reformulated system
\begin{align}\label{eq2.1}
\left\{
\begin{aligned}
&\partial_t \psi=-\text{i}\bigg ( {\mathcal L}\psi+ \frac{\beta|\psi|^2\psi q}{\sqrt{(\psi^2,\psi^2)+C_0}} \bigg), \\
&\frac{d}{dt} q=2 {\rm Re}\bigg(\partial_t \psi,\frac{|\psi|^2\psi}{\sqrt{(\psi^2,\psi^2)+C_0}} \bigg),
\end{aligned}
\right.
\end{align}
with the consistent initial conditions
\begin{align}\label{eq2.6}
\psi(x,y,0)=\psi_0(x,y ),  \  q(0)=\sqrt{\big(  \psi^2_0(x,y), \psi^2_0(x,y)) +C_0   },\ (x,y)\in\Omega,
\end{align}
and the periodic boundary condition \eqref{eq1.2}.

The SAV reformulation~\eqref{eq2.1} satisfies the mass conservation law \eqref{mass-conservation-law}  and  the modified energy conservation law \eqref{modified-energy-conservation-law}, respectively. {Since the systems~\eqref{eq1.1} and \eqref{eq2.1}  are identical at a continuous level, the mass and energy conservation laws of the reformulated system \eqref{eq2.1}  stand naturally.}

%\begin{align}\label{mass-conservation-law}
%\frac{d}{dt}M=\frac{d}{dt}(\psi,\psi)=2{\textrm {Re}}(\partial_t \psi,\psi)=2{\textrm {Im}}\bigg({\mathcal L}\psi+\frac{\beta|\psi|^2\psi q}{\sqrt{(\psi^2,\psi^2)+C_0}} ,\psi\bigg)=0,
%\end{align}
%and
%\begin{align}\label{energy-conservation-law}
%\frac{d E}{dt}&=2{\rm Re}(\partial_t \psi,{\mathcal L}\psi)+\beta q \frac{d}{dt} q =2{\textrm {Re}}\bigg (\partial_t \psi, {\mathcal L}\psi+\frac{\beta|\psi|^2\psi q}{\sqrt{(\psi^2,\psi^2)+C_0}}\bigg)\nonumber\\
%&=2{\textrm {Im}}\bigg ({\mathcal L}\psi+\frac{\beta|\psi|^2\psi q}{\sqrt{(\psi^2,\psi^2)+C_0}},{\mathcal L}\psi+\frac{\beta|\psi|^2\psi q}{\sqrt{(\psi^2,\psi^2)+C_0}} \bigg)
%=0,
%\end{align}
%where ${\rm Re} (f)$ and ${\rm Im}(f)$ represent the real and imaginary parts of $f$, respectively.  Here, the fact $({\mathcal L}\psi,\psi)\in \mathbb R$ is used in above %deduction.

Following the Lawson transformation~\cite{Lawson1967}, we define the change of variables $u=\rm exp({\rm i}{\mathcal L} t)\psi$, then the system \eqref{eq2.1} becomes
\begin{align}\label{eq2.2}
\left\{
\begin{aligned}
&\partial_tu\!=\!-{\rm i}\exp({\rm i}\mathcal{L}t)\frac{\beta|\exp(-{\rm i}\mathcal{L}t)u|^2\exp(-{\rm i}\mathcal{L}t)u q}{\sqrt{((\exp(-{\rm i}\mathcal{L}t)u)^2,(\exp(-{\rm i}\mathcal{L}t)u)^2)+C_0}}, \\
%&\frac{d}{dt} q\!=\!\frac{\big(\partial_t(\exp(-{\rm i}\mathcal{L}t)u),|\exp(-{\rm i}\mathcal{L}t)u|^2\exp(-{\rm i}\mathcal{L}t)u\big)\!+\!\big(|\exp(-{\rm i}\mathcal{L}t)u|^2\exp(-{\rm i}\mathcal{L}t)u,\partial_t (\exp(-{\rm i}\mathcal{L}t)u)\big)}
%{\sqrt{\big((\exp(-{\rm i}\mathcal{L}t)u)^2,(\exp(-{\rm i}\mathcal{L}t)u)^2\big)+C_0}}.\\
&\frac{d}{dt} q\!=\!2{\rm Re}\Bigg(\!\!-{\rm i} \exp(-{\rm i}\mathcal{L}t) {\mathcal L}u,\frac{|\exp(-{\rm i}\mathcal{L}t)u|^2\exp(-{\rm i}\mathcal{L}t)u}{\sqrt{\big((\exp(-{\rm i}\mathcal{L}t)u)^2,(\exp(-{\rm i}\mathcal{L}t)u)^2\big)+C_0}}  \Bigg),\\
\end{aligned}
\right.
\end{align}
where the fact $2{\rm Re}\bigg(\! \exp(-{\rm i}\mathcal{L}t)\partial_t u,\frac{|\exp(-{\rm i}\mathcal{L}t)u|^2\exp(-{\rm i}\mathcal{L}t)u}{\sqrt{\big((\exp(-{\rm i}\mathcal{L}t)u)^2,(\exp(-{\rm i}\mathcal{L}t)u)^2\big)+C_0}}  \bigg)\!=\!0$ was used. The system \eqref{eq2.2} further satisfies the following  mass conservation law
\begin{align}\label{E-mass-conservation-law}
\widetilde{M}(t):= (u,u) \equiv \widetilde{M}(0),\ t\ge 0,
\end{align}
 and modified energy conservation law
 \begin{align}\label{E-modified-energy-conservation-law}
\widetilde{E}(t):=( {\mathcal L}u, u)+\frac{\beta}{2}q^2-\frac{\beta}{2}C_0 \equiv \widetilde{E}(0),\ t\ge 0.
\end{align}

\section{Exponential SAV-RK method}
In this section, we further apply the RK method for the system~\eqref{eq2.2} in time. Choose $\tau=\frac{T}{N}$ be the time step, where $N$ is a positive integer number, and denote $t_{n}=n\tau$ for $n=0,1,2\cdots,N$; let $\psi^n$ be the numerical approximation of  $\psi(x,y,t_n)$ for $n=0,1,2,\cdots,N$. Applying a RK method to the system \eqref{eq2.2}, we have
\begin{align}\label{RK-NLSE}
\left\{
\begin{aligned}
&U_i=u^n+\tau\sum_{j=1}^{s}a_{ij}{\widetilde k}_j, \ \ Q_i=q^n+\tau\sum_{j=1}^{s}a_{ij}l_j, \\
&{\widetilde k}_i=-{\rm i}\exp({\rm i}\mathcal{L}(t_n\!+c_i\tau)) \frac{\beta|\exp(-{\rm i}\mathcal{L}(t_n\!+c_i\tau))U_i|^2\exp(-{\rm i}\mathcal{L}(t_n\!+c_i\tau))U_i Q_i}
{\sqrt{\big((\exp(-{\rm i}\mathcal{L}(t_n\!+c_i\tau)) U_i)^2,(\exp(-{\rm i}\mathcal{L}(t_n\!+c_i\tau)) U_i)^2\big)+C_0}} , \\
\\
%&l_i=~\bigg(-{\rm i}\exp(-{\rm i}\mathcal{L}t_i)\mathcal{L}u_i+\exp(-{\rm i}\mathcal{L}t_i){\widetilde k}_i, \frac{|\exp(-{\rm i}\mathcal{L}t_i)u_i|^2\exp(-{\rm i}\mathcal{L}t_i)u_i}
%{\sqrt{\big((\exp(-{\rm i}\mathcal{L}t_i) u_i)^2,(\exp(-{\rm i}\mathcal{L}t_i) u_i)^2\big)+C_0}}\bigg)
%\\
%&~~~~~+\bigg(\frac{|\exp(-{\rm i}\mathcal{L}t_i)u_i|^2\exp(-{\rm i}\mathcal{L}t_i)u_i}
%{\sqrt{\big((\exp(-{\rm i}\mathcal{L}t_i) u_i)^2,(\exp(-{\rm i}\mathcal{L}t_i) u_i)^2\big)+C_0}},-{\rm i}\exp(-{\rm i}\mathcal{L}t_i)\mathcal{L}u_i+\exp(-{\rm i}\mathcal{L}t_i){\widetilde k}_i\bigg),\\
&l_i=~2{\rm Re}\bigg(-{\rm i}\exp(-{\rm i}\mathcal{L}(t_n\!+c_i\tau)) \mathcal{L}U_i, \frac{|\exp(-{\rm i}\mathcal{L}(t_n\!+c_i\tau))U_i|^2\exp(-{\rm i}\mathcal{L}(t_n\!+c_i\tau))U_i}
{\sqrt{\big((\exp(-{\rm i}\mathcal{L}(t_n\!+c_i\tau)) U_i)^2,(\exp(-{\rm i}\mathcal{L}(t_n\!+c_i\tau)) U_i)^2\big)+C_0}}\bigg), \\
&u^{n+1}=u^n+\tau\sum_{i=1}^{s}b_i{\widetilde k}_i, \ \ q^{n+1}=q^n+\tau\sum_{i=1}^sb_il_i,
\end{aligned}
\right.
\end{align}
where $a_{ij}, b_i, i,j=1,\cdots,s$ are RK coefficients, and $U_i,\ \Psi_i$ and $Q_i$ are numerical approximations of $u(x,y,t_n\!+c_i\tau),\ \psi(x,y,t_n\!+c_i\tau)$ and ${q(t_n\!+c_i\tau)}$, respectively with $c_i=\sum_{j=1}^sa_{ij}$.

After manipulating the exponentials (i.e., $\psi^n=\exp(-{\rm i}\mathcal{L}t_n)u^n$, $\Psi_i=\exp(-{\rm i}\mathcal{L}(t_n\!+c_i\tau))U_i$ and $k_i=\exp(-{\rm i}\mathcal{L}(t_n\!+c_i\tau)){\widetilde k}_i$), the discretization can be rewritten in terms
of the original variable to give a class of exponential Runge-Kutta (ERK) methods for solving \eqref{eq2.1} as follows:
\begin{align}\label{ERK1}
\left\{
\begin{aligned}
&\Psi_i=\exp(-{\rm i}\mathcal{L}c_i\tau) \psi^n+\tau\sum_{j=1}^{s}a_{ij}\exp({\rm i}\mathcal{L}(c_j-c_i)\tau) k_j,\\
&Q_i=q^n+\tau\sum_{j=1}^{s}a_{ij}l_j,
\end{aligned}
\right.
\end{align}
where
$k_i\!=\!-{\rm i}\ \frac{\beta|\Psi_i|^2\Psi_i Q_i}
{\sqrt{\big(\Psi_i^2,\Psi_i^2\big)\!+\!C_0}} ,
l_i\!=2{\rm Re}\!\Big(\!-{\rm i}\mathcal{L}\Psi_i, \frac{|\Psi_i|^2\Psi_i}
{\sqrt{\big(\Psi_i^2,\Psi_i^2\big)\!+\!C_0}}\Big)$.  Then $\psi^{n+1}$ and $q^{n+1}$ are updated by
\begin{align}\label{ERK2}
\left\{
\begin{aligned}
&\psi^{n+1}=\exp(-{\rm i}\mathcal{L}\tau) \psi^n+\tau\sum_{i=1}^{s}b_i \exp(-{\rm i}\mathcal{L}(1-c_i)\tau) k_i,\\
&q^{n+1}=q^n+\tau\sum_{i=1}^sb_il_i,
\end{aligned}
\right.
\end{align}
which is the exponential scalar auxiliary variable Runge-Kutta method (ESAV-RK) method for the NLS equation.

{
\begin{Lemma} \cite{JLQZmc18}\label{lem3.1} For the symmetric positive definite operator $\mathcal{L}$ and the operator $\exp({\rm i}\mathcal{L}t)$, we have the following
results:
\begin{itemize}
\item $\exp({\rm i}\mathcal{L}t)$ commutes with $\mathcal{L}$;
\item $\exp({\rm i}\mathcal{L}t)^{*}=\exp(({\rm i}\mathcal{L}t)^*)=\exp(-{\rm i}\mathcal{L}t)$,
\end{itemize}
%\begin{align*}
%\exp(-{\rm i}\mathcal{L}\tau) \mathcal{L} \psi= \mathcal{L} \exp(-{\rm i}\mathcal{L}\tau)\psi, \
%\exp({\rm i}\mathcal{L}\tau) \exp(-{\rm i}\mathcal{L}\tau)\psi=\psi, \
%\big(\exp(-{\rm i}\mathcal{L}\tau)\psi,\phi \big)=\big (\psi, \exp({\rm i}\mathcal{L}\tau)\phi \big).
%\end{align*}
\end{Lemma}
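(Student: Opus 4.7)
The plan is to derive both statements directly from the power series representation $\exp({\rm i}\mathcal{L}t)=\sum_{k=0}^{\infty}\tfrac{({\rm i}t)^k}{k!}\mathcal{L}^k$, exploiting the self-adjointness of $\mathcal{L}=-\tfrac{1}{2}\Delta$ under the periodic boundary condition~\eqref{eq1.2}. Self-adjointness itself follows by integrating by parts twice, with all boundary contributions cancelling by periodicity, so $(\mathcal{L}f,g)=(f,\mathcal{L}g)$ on the smooth periodic test space; symmetric positivity is then built into the hypothesis.

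For the first bullet, I would simply observe that every partial sum $\sum_{k=0}^{N}\tfrac{({\rm i}t)^k}{k!}\mathcal{L}^k$ commutes with $\mathcal{L}$, because $\mathcal{L}\cdot\mathcal{L}^k=\mathcal{L}^{k+1}=\mathcal{L}^k\cdot\mathcal{L}$ trivially for every $k\ge 0$. Passing to the limit in the appropriate operator topology then gives $\mathcal{L}\exp({\rm i}\mathcal{L}t)=\exp({\rm i}\mathcal{L}t)\mathcal{L}$.

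For the second bullet, I would apply the adjoint termwise. Since $t\in\mathbb{R}$, we have $\overline{({\rm i}t)^k}=(-{\rm i}t)^k$, and since $\mathcal{L}^{*}=\mathcal{L}$, we have $(\mathcal{L}^k)^{*}=(\mathcal{L}^*)^k=\mathcal{L}^k$. Hence each term transforms as $\bigl(\tfrac{({\rm i}t)^k}{k!}\mathcal{L}^k\bigr)^{*}=\tfrac{(-{\rm i}t)^k}{k!}\mathcal{L}^k$, and summing yields $\exp({\rm i}\mathcal{L}t)^{*}=\exp(-{\rm i}\mathcal{L}t)$. The intermediate identity $\exp({\rm i}\mathcal{L}t)^{*}=\exp\bigl(({\rm i}\mathcal{L}t)^{*}\bigr)$ is precisely the same calculation written down \emph{before} simplifying the exponent via self-adjointness, so it requires no extra work.

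The main obstacle is making the termwise manipulations rigorous for the \emph{unbounded} operator $\mathcal{L}$. The cleanest route is to invoke the spectral calculus on $L^2(\Omega)$: the Fourier basis $\{e^{2\pi{\rm i}(kx/l_1+my/l_2)}\}_{k,m\in\mathbb{Z}}$ diagonalizes $\mathcal{L}$ with eigenvalues $\lambda_{k,m}=\tfrac{1}{2}\bigl((2\pi k/l_1)^2+(2\pi m/l_2)^2\bigr)\ge 0$, and $\exp({\rm i}\mathcal{L}t)$ acts as multiplication by the unimodular symbol $e^{{\rm i}\lambda_{k,m}t}$ on each mode. Both claims then reduce mode-by-mode to the elementary scalar identities $\lambda\,e^{{\rm i}\lambda t}=e^{{\rm i}\lambda t}\,\lambda$ and $\overline{e^{{\rm i}\lambda t}}=e^{-{\rm i}\lambda t}$ for $\lambda,t\in\mathbb{R}$, which makes the identifications unambiguous and bypasses any convergence concern for the power series.
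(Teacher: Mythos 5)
Your proposal is correct. Note that the paper itself offers no proof of this lemma --- it is simply quoted from the cited reference \cite{JLQZmc18} --- so there is nothing in the text to compare against; your argument supplies what the paper omits. The route you take (termwise manipulation of the power series, made rigorous by diagonalizing $\mathcal{L}$ in the Fourier basis so that $\exp({\rm i}\mathcal{L}t)$ acts by the unimodular multipliers $e^{{\rm i}\lambda_{k,m}t}$) is the standard functional-calculus argument and is exactly what one would expect the cited source to rely on; you are right that the spectral decomposition is the correct way to sidestep convergence issues for the unbounded operator. One pedantic remark: on the periodic torus $\mathcal{L}=-\tfrac12\Delta$ has the eigenvalue $\lambda_{0,0}=0$ for the constant mode, so it is only positive \emph{semi}-definite despite the lemma's phrasing; this does not affect either conclusion, since both scalar identities $\lambda e^{{\rm i}\lambda t}=e^{{\rm i}\lambda t}\lambda$ and $\overline{e^{{\rm i}\lambda t}}=e^{-{\rm i}\lambda t}$ hold for all real $\lambda\ge 0$, but it is worth flagging if you want your write-up to be airtight.
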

\noindent where $\exp({\rm i}\mathcal{L}t)^{*}$ denotes the adjoint operator of $\exp({\rm i}\mathcal{L}t)$.
%The proof can be obtained by using direct computations, and we omit the details for brevity.
}

\begin{Theorem} \label{th2.1}  If the coefficients of a RK method  satisfy
\begin{align}\label{symplectic_condition}
b_ia_{ij}+b_ja_{ji}=b_ib_j,\ \forall \ i,j=1,\cdots,s,
\end{align}
 the proposed ERK method \eqref{ERK1}-\eqref{ERK2} can preserve the semi-discrete mass and modified energy conservation laws, respectively, that is,
\begin{align*}
M^{n}=M^0,\ E^{n}=E^0, \ n=1,\cdots,N,
\end{align*}
where
\begin{align}\label{eq3.04}
 M^n=( \psi^n,\psi^n),\ E^{n}=( {\mathcal L}\psi^n, \psi^n)+\frac{\beta}{2}(q^n)^2-\frac{\beta}{2}C_0.
\end{align}
\end{Theorem}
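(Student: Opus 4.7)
The plan is to carry out the conservation analysis in the Lawson-transformed stage system \eqref{RK-NLSE} and then transport the resulting identities back to $\psi^n$ via Lemma \ref{lem3.1}. Since $\psi^n=\exp(-{\rm i}{\mathcal L}t_n)u^n$ with $\exp(-{\rm i}{\mathcal L}t_n)$ unitary and commuting with ${\mathcal L}$, one has $M^n=(u^n,u^n)$ and $E^n=({\mathcal L}u^n,u^n)+\tfrac{\beta}{2}(q^n)^2-\tfrac{\beta}{2}C_0$, so it suffices to prove that the $(u,q)$ update in \eqref{RK-NLSE} preserves these two quantities.

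The mass identity is the prototype bilinear calculation. Expanding $u^{n+1}=u^n+\tau\sum_i b_i\widetilde k_i$ in $(u^{n+1},u^{n+1})$ and then eliminating $u^n$ through $u^n=U_i-\tau\sum_j a_{ij}\widetilde k_j$ in each cross term, I obtain
\[
(u^{n+1},u^{n+1})-(u^n,u^n)=2\tau\sum_i b_i\,{\rm Re}(\widetilde k_i,U_i)+\tau^2\sum_{i,j}\big[b_ib_j-(b_ia_{ij}+b_ja_{ji})\big]{\rm Re}(\widetilde k_i,\widetilde k_j),
\]
where the symmetrization uses that $(\widetilde k_i,\widetilde k_j)+(\widetilde k_j,\widetilde k_i)=2{\rm Re}(\widetilde k_i,\widetilde k_j)$ is symmetric in $i\leftrightarrow j$. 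Hypothesis \eqref{symplectic_condition} annihilates the $\tau^2$ sum. For the $O(\tau)$ term, substituting the definition of $\widetilde k_i$ and moving the factor $\exp({\rm i}{\mathcal L}(t_n+c_i\tau))$ onto $U_i$ by Lemma \ref{lem3.1} rewrites $(\widetilde k_i,U_i)$ as $-{\rm i}\beta Q_i(|\Psi_i|^2\Psi_i,\Psi_i)/\sqrt{(\Psi_i^2,\Psi_i^2)+C_0}$; since $(|\Psi_i|^2\Psi_i,\Psi_i)=\Vert\Psi_i\Vert_{L^4}^4$ is real, this is purely imaginary, hence ${\rm Re}(\widetilde k_i,U_i)=0$ and mass is preserved.

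For the modified energy I repeat the expansion with ${\mathcal L}$ inserted, exploit self-adjointness of ${\mathcal L}$, and obtain after the same $\tau^2$ cancellation
\[
({\mathcal L}u^{n+1},u^{n+1})-({\mathcal L}u^n,u^n)=2\tau\sum_i b_i\,{\rm Re}({\mathcal L}U_i,\widetilde k_i).
\]
Migrating $\exp({\rm i}{\mathcal L}(t_n+c_i\tau))$ in $\widetilde k_i$ via Lemma \ref{lem3.1} and using the commutation $[{\mathcal L},\exp({\rm i}{\mathcal L}t)]=0$, the stage quantity ${\rm Re}({\mathcal L}U_i,\widetilde k_i)$ collapses to $-\tfrac{\beta}{2}Q_il_i$, because the definition of $l_i$ in \eqref{RK-NLSE} is precisely $\tfrac{2}{\sqrt{(\Psi_i^2,\Psi_i^2)+C_0}}\,{\rm Im}({\mathcal L}\Psi_i,|\Psi_i|^2\Psi_i)$. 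A parallel but purely scalar computation for $(q^{n+1})^2-(q^n)^2$, now using $q^n=Q_i-\tau\sum_j a_{ij}l_j$ together with the $i\leftrightarrow j$ symmetry of $l_il_j$ and \eqref{symplectic_condition}, yields $(q^{n+1})^2-(q^n)^2=2\tau\sum_i b_iQ_il_i$. Multiplying this identity by $\beta/2$ and adding to the previous one produces exact cancellation of the surviving $\sum_i b_iQ_il_i$ terms, which delivers modified-energy preservation at the $(u,q)$ level and, by the transfer above, at the $(\psi,q)$ level.

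The main obstacle is the complex inner product bookkeeping: one must carefully track the positions of the $-{\rm i}$ factors so that the surviving real part is either zero (mass) or matches $l_i$ up to the correct constant (energy), and one must apply \eqref{symplectic_condition} only to quantities that are already symmetric in the stage indices $(i,j)$. Once these points are handled, the rest is the classical quadratic-invariant argument for symplectic RK tableaux, lifted through the Lawson splitting by Lemma \ref{lem3.1}.
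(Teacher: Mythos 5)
Your proof is correct and follows essentially the same route as the paper: work in the Lawson variables $(u^n,q^n)$, use the symplecticity condition \eqref{symplectic_condition} to kill the $\tau^2$ terms and the stage identities ${\rm Re}(\widetilde k_i,U_i)=0$ and ${\rm Re}(\mathcal{L}U_i,\widetilde k_i)+\tfrac{\beta}{2}Q_il_i=0$ to kill the $O(\tau)$ terms, then transfer back to $\psi^n$ via Lemma \ref{lem3.1}. The only difference is that the paper outsources the quadratic-invariant step to Theorem~2.2 of Hairer--Lubich--Wanner, whereas you reprove it explicitly (and correctly) for these two quadratic forms.
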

%\vspace{3mm}
\begin{proof} According to Theorem 2.2 of Ref. \cite{ELW06}, if the coefficients of a RK method  satisfy \eqref{symplectic_condition}, the proposed RK method \eqref{RK-NLSE} satisfies the following semi-discrete mass conservation law
\begin{align*}
\widetilde{M}^n:=(u^n,u^n) \equiv \widetilde{M}^0,\ n=1,2,\cdots,N,
\end{align*}
 and modified energy conservation law
 \begin{align*}
\widetilde{E}^n:=( {\mathcal L}u^n, u^n)+\frac{\beta}{2}(q^n)^2-\frac{\beta}{2}C_0 \equiv \widetilde{E}^0,\ n=1,2,\cdots,N.
\end{align*}
 With {\bf Lemma~\ref{lem3.1}} and $\psi^n=\exp(-{\rm i}\mathcal{L}t_n)u^n$, the above semi-discrete mass and energy conservation laws can be rewritten in terms
of the original variable given by
  \begin{align*}%\label{eq3.04}
 M^n:=( \psi^n,\psi^n)\equiv M^0,\ E^{n}:=( {\mathcal L}\psi^n, \psi^n)+\frac{\beta}{2}(q^n)^2-\frac{\beta}{2}C_0\equiv E^0,\ n=1,2,\cdots,N.
\end{align*}
This completes the proof.
  \end{proof}

\begin{Remark} A numerical scheme that preserves both mass and energy conservation laws of the NLS equation after time and spatial
discretizations is known as a mass- and energy-preserving method. Thus, for the spatial discretization, we shall pay special attentions to the following three aspects: \begin{itemize}
\item preserve the symmetric positive definite property of {the operator} $\mathcal{L}$;
\item preserve the discrete integration-by-parts formulae \cite{DO11};
\item is high-order accuracy which is compatible with the time-discrete methods.
\end{itemize}
Based on these statements and the periodic boundary condition, the standard Fourier pseudo-spectral method is chosen for spatial
discretizations which is omitted here due to space limitation. Interested
readers are referred to Refs.~\cite{gongyz17,ST06} for details.
%{\color{blue}In addition, if based on the so-called method
%of line which first discretes the NLS equation in space by the Fourier pseudo-spectral method resulting in a large system of ODEs. We can also use the ERK method to propose mass- and energy-preserving schemes since the Fourier pseudo-spectral matrix and the corresponding matrix exponential operator satisfy Lemma \ref{lem3.1}.}
\end{Remark}
%\begin{Remark}{\color{blue} Although the SAV approach need introduce an intermediate variable, the intermediate variable can be eliminated in our computations.
%Thus, the SAV approach will not increase extra computation cost.}
%\end{Remark}

\begin{Remark} It is noted that the original discrete Hamiltonian energy at time level $t_n$ is given by
\begin{align} \label{eq3.07}
H^n=( \mathcal L \psi^n,\psi^n )+\frac{\beta}{2}\big((\psi^n)^2,(\psi^n)^2\big).
\end{align}
 However, we should note that the modified energy \eqref{eq3.04} is only equivalent to the Hamiltonian energy \eqref{eq3.07} in the continuous sense,
 but not for the discrete sense. Thus, the proposed schemes cannot preserve the discrete Hamiltonian energy exactly.
\end{Remark}

\section{Numerical examples}
In this section, some numerical examples are presented briefly to demonstrate the accuracy, invariants-preservation, as well as the practicability of
the proposed schemes. For simplicity, in the rest of this paper, we take for example the 4th- and 6th-order Gauss methods, denoted by ESAV-RK4 and ESAV-RK6, respectively. The RK coefficients of the corresponding numerical methods can be found in Ref.~\cite{ELW06}.

%Considering the arbitrary high-order accuracy of {\bf Scheme~\ref{Sch2.1}} in time, we further apply the Fourier pseudo-spectral method for spatial discretization.

The NLS equation \eqref{eq1.1} admits the following progressive plane wave solution
\begin{align*}
&\psi_1(x,y,t)=\exp({\rm i}(k_1x+k_2y-w_1t)), ~~ w_1=(k_1^2+k_2^2)/2+\beta,  ~~ \text{for}~d=2, \\
&\psi_2(x,y,z,t)=\exp({\rm i}(k_1x+k_2y+k_3z-w_2t)), ~~w_2=(k_1^2+k_2^2+k_3^2)/2+\beta,   ~~ \text{for}~d=3,
\end{align*}
 %\begin{align*}
% \psi(x,y,t)=
% \left \{
% \aligned
% &\text{exp}(i(k_1x+k_2y-w_1t)), ~~~ w_1=(k_1^2+k_2^2)/2+\beta,% ~~ \text{for}~~ d=2,\\
% %&\text{exp}(i(k_1x+k_2y+k_3z-w_2t)), ~~~w_2=(k_1^2+k_2^2+k_3^2)/2+\beta,   ~~ \text{for}~~d=3,
% \endaligned
% \right.
% \end{align*}
where $k_1=k_2=k_3=1$. We choose the spatial domains as $\mathcal D=[0,2\pi]^d \ (d=2,3)$  and fix the Fourier node $32\times 32$ for $d=2$ and $32\times 32\times 32$ for $d=3$ respectively such that the spatial discretization errors are negligible. In addition, the convergence rate is obtained by the following formula
\begin{align*}
\quad {\rm  Rate}={\rm ln} \big(error_1/error_2)/{\rm ln} (\delta_1/\delta_2),
\end{align*}
where $\delta_l,error_l \ (l=1,2)$ are step sizes and errors with step size $\delta_l$, respectively. Moreover, the relative errors of discrete mass, Hamiltonian energy and quadratic energy on time level $t_n$ will be calculated by
\begin{align*}
 RM^n:=|(M^n- M^0)/M^0|, \  RH^n:=|(H^n- H^0)/H^0|, \ RE^n:=|(E^n- E^0)/E^0|, \ \ n=1,\cdots,N,
\end{align*}
respectively.

We first choose different $\beta$ to test the temporal accuracy in 2D/3D, and the results are summarized in Table~\ref{tab1}. As is shown that the ESAV-RK4 and ESAV-RK6 methods arrive at fourth-order and sixth-order convergence rates in time, respectively. Furthermore, for a fixed  time step and mesh size, the numerical errors are observed
to increase along with the growth of $\beta$. In this case, the high-order accurate numerical algorithms are more preferable in practical computations to
obtain a given high accuracy, especially in long-time simulation.

\vspace{-0.5cm}
\begin{table}[H]
\footnotesize
\caption{\footnotesize Temporal errors of the numerical solutions at $T=9$. \label{tab1}}
\centering
%\begin{tabular}{@{}  r  @{}  c  @{}  c  @{}  c  @{} c  @{} c  @{}  }
\begin{tabular}{ccccccc cccc }
%\begin{tabular}{p{1.5cm}p{2cm} p{1.9cm} p{1.07cm} p{1.07cm} p{1.2cm} p{1.08cm} p{1.08cm} p{1.08cm} p{1.08cm} p{1.08cm}  }
\toprule
% \qquad{ }& &\qquad{ } \qquad{ }  & \quad{ } $\tau=0.03$ { }& \quad{ }  $\tau=0.02$  { }& \quad{ } $ \tau=0.015$ \quad{ } & { } $ \tau=0.01$    \quad{ } \\
  & & &  2D case     &             &    &    &  3D case&    &   &       \\
\cmidrule(lr){4-7} \cmidrule(lr){8-11}
  & & &  $\tau\!=\!0.03$ &   $\tau\!=\!0.02$ & $\tau\!=\!0.015$  &  $ \tau\!=\!0.01$ &  $\tau\!=\!0.05$ &   $\tau\!=\!0.04$ & $ \tau\!=\!0.025$  &  $ \tau\!=\!0.0125$    \\

\hline
                & $\beta\!=\!5$  &$\lVert e \rVert_{\infty}$  & 3.16e-05 &  6.25e-06 & 1.98e-06 & 3.91e-07&2.43e-04&9.98e-05&1.52e-05&9.53e-07        \\
                &            &{\small Rate}           &* & 4.00 &4.00 & 4.00 & * &3.99 & 4.00 & 4.00     \\
 ESAV-RK4      & $\beta\!=\!6$  &$\lVert e \rVert_{\infty}$  &7.86e-05 & 1.55e-05 & 4.92e-06 &9.72e-07 &6.04e-04&2.48e--04&3.79e-05&2.37e-06   \\
            &            &{\small Rate}            & *&4.00 & 4.00& 4.00 &*&3.99& 4.00& 4.00 \\
                &$\beta\!=7\!$   &$\lVert e \rVert_{\infty}$  & 1.70e-04&3.36e-05 & 1.06e-05 &2.10e-06&1.30e-03&5.35e-04&8.19e-05&5.13e-06   \\
                &            &{\small Rate}            & *&4.00 & 4.00& 4.00&*&3.98&3.99&4.00  \\
 \hline
                &$\beta\!=\!5$   &$\lVert e \rVert_{\infty}$  & 5.08e-09 & 4.46e-10  & 7.95e-11  & 6.89e-12&1.09e-07&2.85e-08&1.70e-09&2.64e-11 \\
                &            &{\small Rate}           & *  & 5.99 &   5.99    &  6.03 &*&6.00&6.00&6.01  \\
    ESAV-RK6   &$\beta\!=\!6$   &$\lVert e \rVert_{\infty}$  & 1.82e-08 & 1.60e-09   & 2.85e-10  & 2.50e-11 & 3.89e-07 & 1.02e-07 &6.10e-09&9.54e-11\\
            &            &{\small Rate}            &* &6.00 & 6.00 & 6.00 &*&6.00&6.00 &6.00  \\
                &$\beta\!=\!7$       &$\lVert e \rVert_{\infty}$  & 5.35e-08 & 4.70e-09  & 8.37e-10  & 7.35e-11&1.14e-06&3.00e-07&1.79e-08&2.82e-10  \\
                &            &{\small Rate}            &* & 6.00  &  6.00  & 6.00&*& 6.00  &  6.00  & 6.00 \\
\toprule
\end{tabular}
\end{table}
\vspace{-1mm}
Moreover, we research the long-time behavior of the proposed schemes at a large time period $T=20$ with $\tau=0.01$ and the Fourier node $32\times 32$ for $d=2$ and $32\times 32\times 32$ for $d=3$. As is illustrated in Figure~\ref{fig1.1} (a)-(d) that the proposed schemes preserve
the discrete mass and energy exactly, which conforms the preceding theoretical analysis.

\begin{figure}[H]
\centering
\subfigure[]{
\begin{minipage}{0.2\linewidth}
\centerline{\includegraphics[height=3.4cm,width=1.2\textwidth]{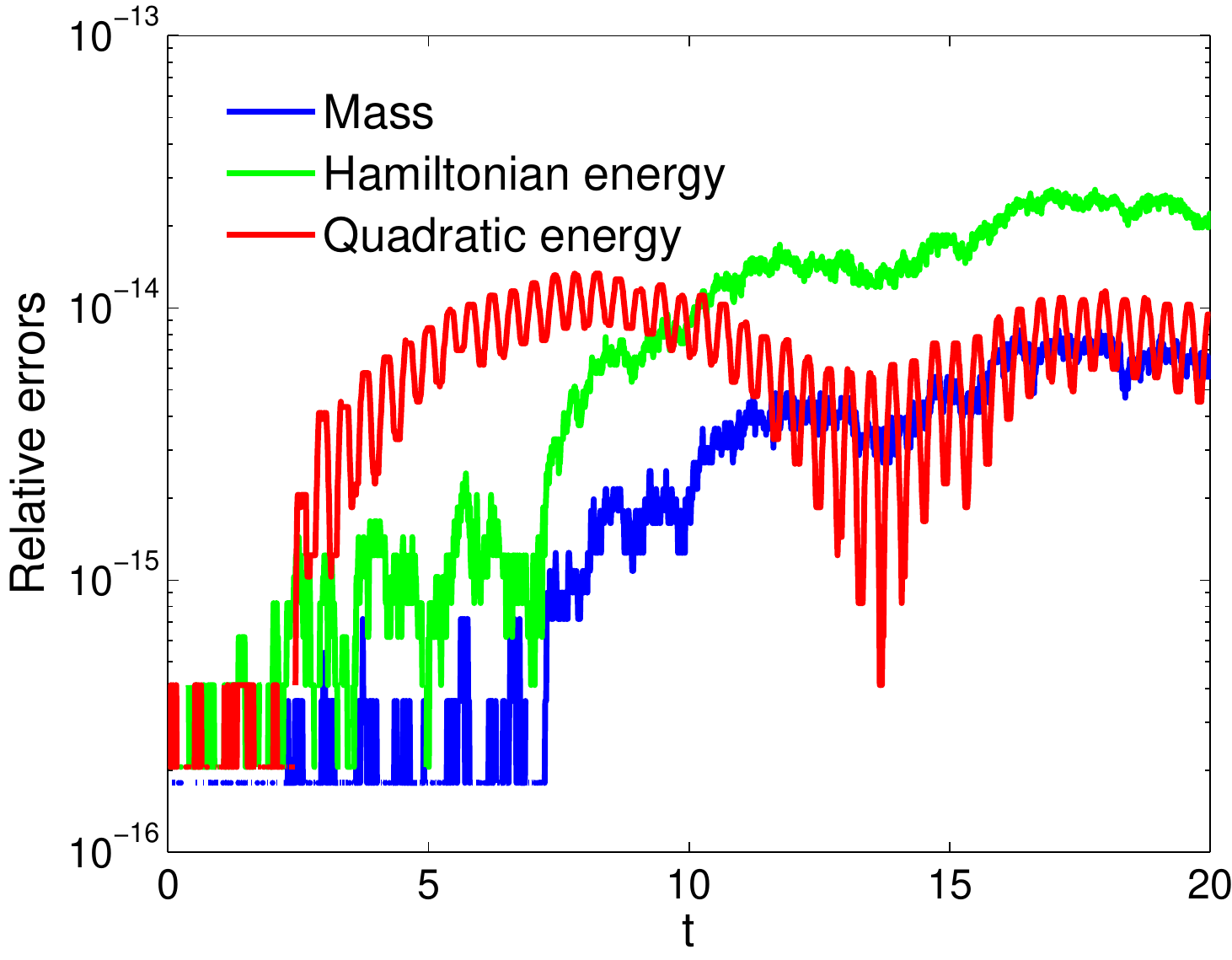}~~~}
\end{minipage}
}~~~
\subfigure[]{
\begin{minipage}{0.2\linewidth}
  \centerline{\includegraphics[height=3.4cm,width=1.2\textwidth]{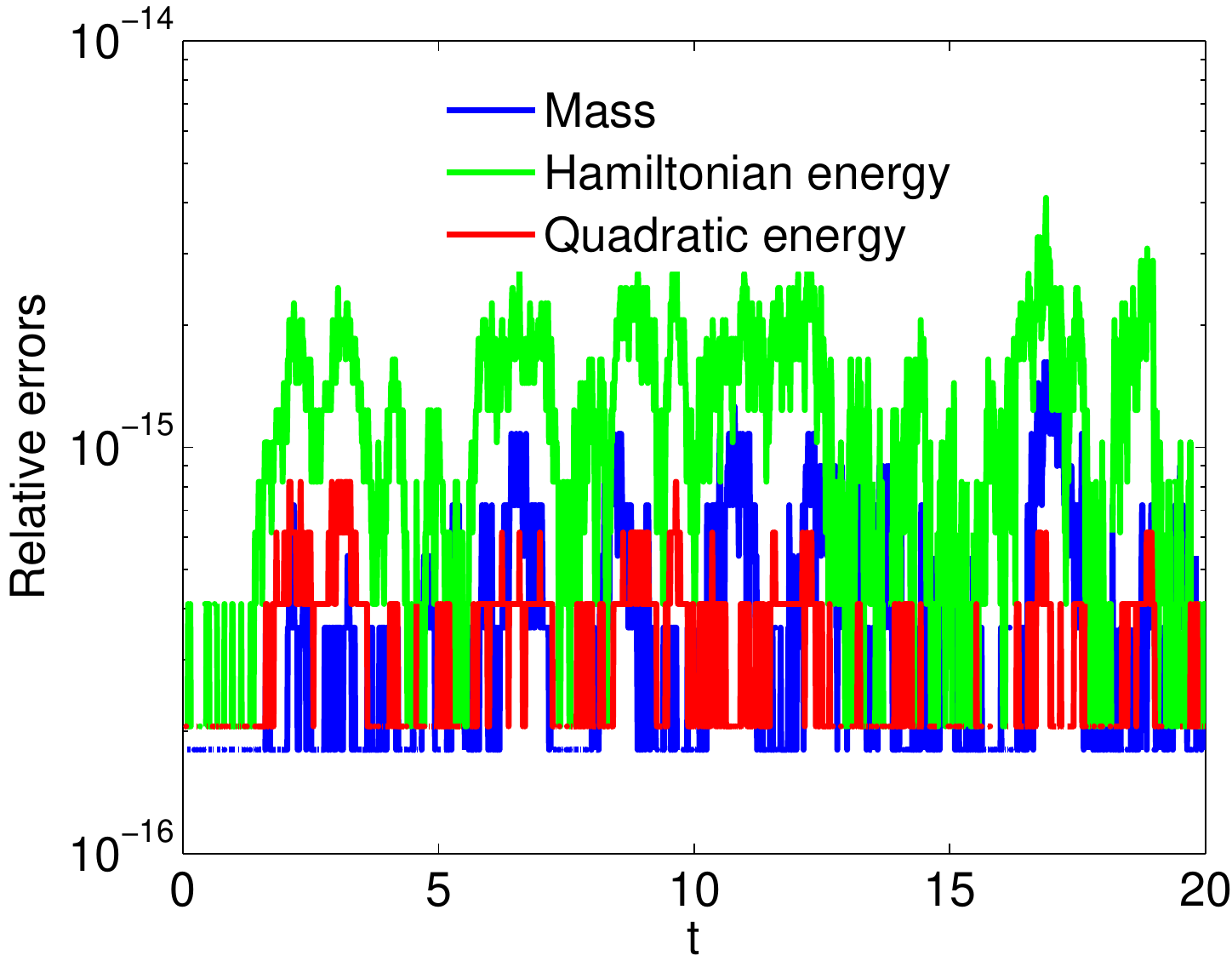}~~~}
\end{minipage}
}~~~
\subfigure[]{
\begin{minipage}{0.2\linewidth}
  \centerline{\includegraphics[height=3.4cm,width=1.2\textwidth]{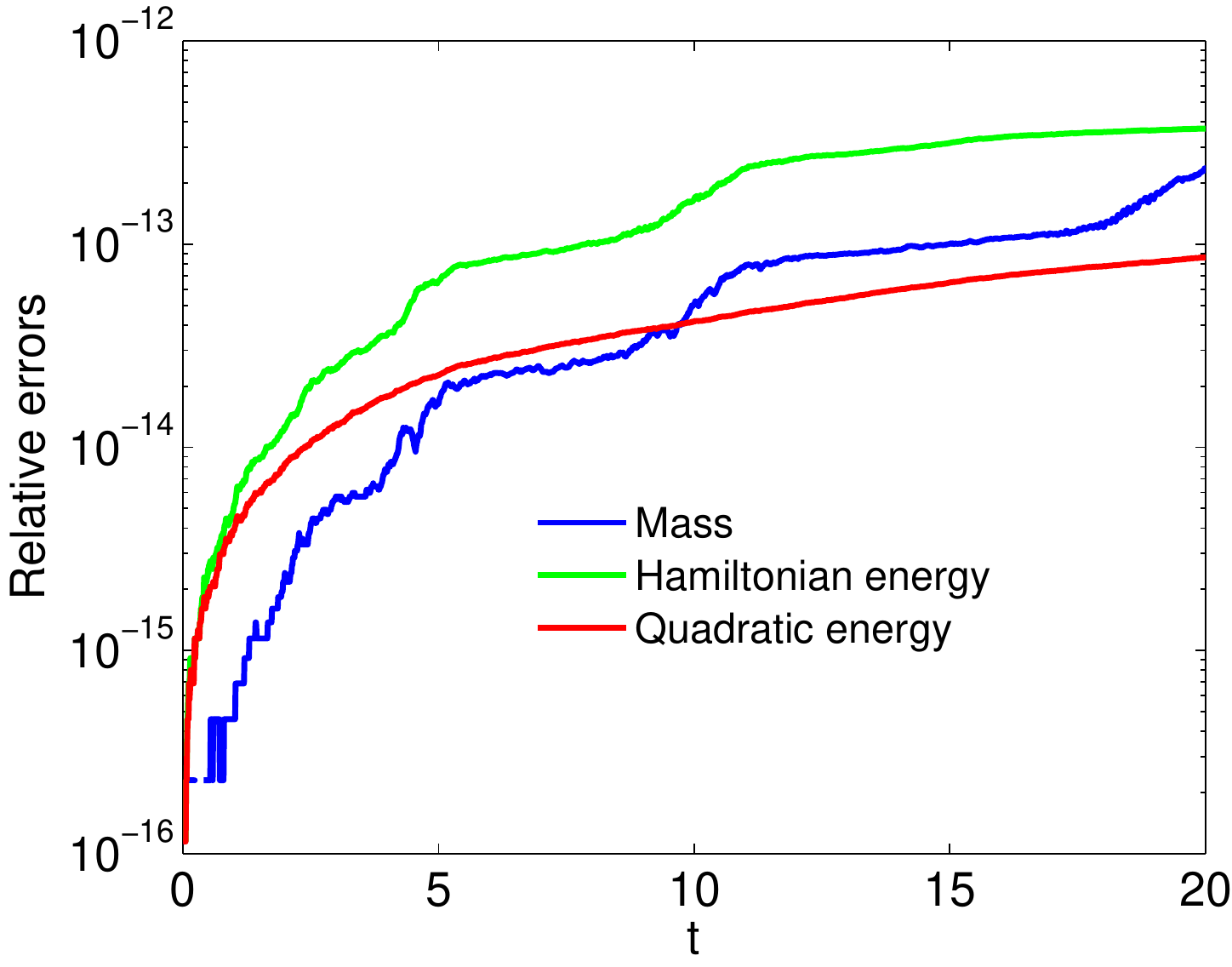}~~~}
\end{minipage}
}~~~
\subfigure[]{
\begin{minipage}{0.2\linewidth}
  \centerline{\includegraphics[height=3.4cm,width=1.2\textwidth]{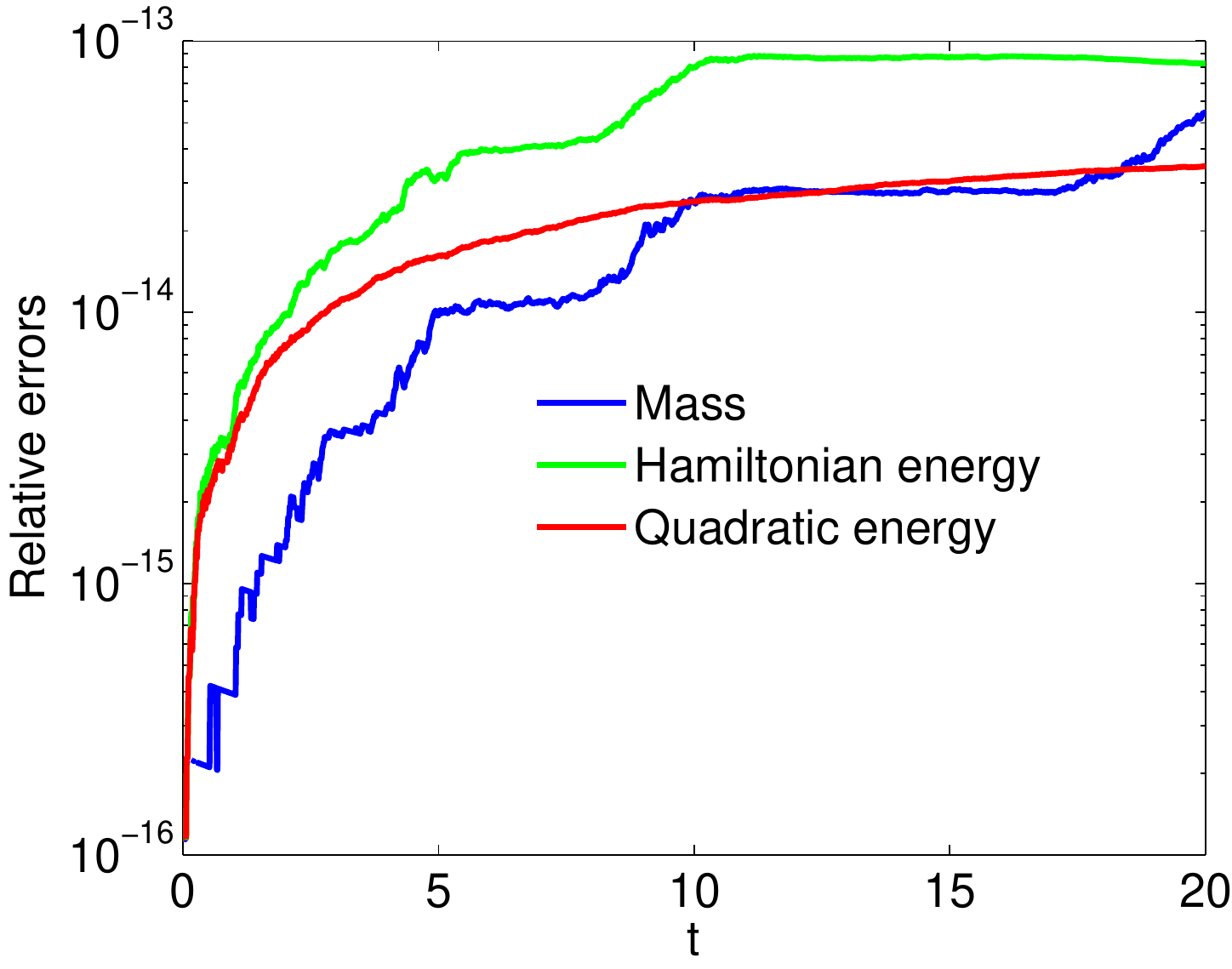}~~~}
\end{minipage}
}
\setlength{\abovecaptionskip}{-0.5mm}
\caption{\footnotesize Relative errors of discrete mass and energy with $\beta=5$ computed by (a) ESAV-RK4 for 2D case;
(b) ESAV-RK6 for 2D case; (c) ESAV-RK4 for 3D case; (d) ESAV-RK6 for 3D case, respectively.}
\label{fig1.1}
\end{figure}

\section{Conclusions}
In this paper, we present a novel class of arbitrary high-order exponential Runge-Kutta methods for solving the NLS equation by combing the SAV approach with the Lawson method. We show that the proposed method can preserve both the mass and the modified energy. Numerical tests are indicated to verify the accuracy and
effectiveness of the proposed schemes. The numerical strategy adopted in this paper can be generalized for general Hamiltonian partial differential systems to develop high-order
energy-preserving exponential Runge-Kutta methods. Here, we should note that, in general,  the particularly
interesting types of ERK methods are integrating factor (IF) methods and exponential time differencing (ETD) methods, respectively. The proposed method of this paper is actually assigned to the IF methods and arbitrary high-order structure-preserving ETD methods for the conservative systems will be presented in a separated report.

\section*{Acknowledgments}
\indent \normalsize Jin Cui's work is supported by Natural Research Fund of Nanjing Vocational College of Information Technology (Grant No. YK20200901).
Chaolong Jiang's work is partially supported by the National Natural Science Foundation of China (Grant No. 11901513), the Yunnan Provincial Department
of Education Science Research Fund Project (Grant No. 2019J0956) and the Science and Technology Innovation Team on Applied Mathematics in Universities of
 Yunnan. Yushun Wang's work is partially supported by the National Natural Science Foundation of China (Grant No. 11771213).% and the National Key Research
% and Development Project of China (Grant Nos. 2018YFC0603500, 2018YFC1504205). %The authors are in particular grateful to Dr. Yuezheng Gong for fruitful
 %discussions on the fast solver presented in Section 5.

%\bibliographystyle{plain}
%\bibliography{ref}

\begin{thebibliography}{10}

\bibitem{ABB13}
X.~Antoine, W.~Bao, and C.~Besse.
\newblock Computational methods for the dynamics of the nonlinear
  {S}chr\"odinger/{G}ross-{P}itaevskii equations.
\newblock {\em Comput. Phys. Commun.}, 184:2621--2633, 2013.

%\bibitem{antoine15}
%X.~Antoine and R.~Duboscq.
%\newblock Gpelab, a matlab toolbox to solve {G}ross-{P}itaevskii equations
%  {II}: {D}ynamics of stochastic simulations.
%\newblock {\em Comput. Phys. Commun.}, 193:95--117, 2015.

\bibitem{BC13}
W.~Bao and Y.~Cai.
\newblock Optimal error estimates of finite difference methods for the
  {G}ross-{P}itaevskii equation with angular momentum rotation.
\newblock {\em Math. Comp.}, 82:99--128, 2013.

\bibitem{BBCIamc18}
L.~Barletti, L.~Brugnano, G.~F. Caccia, and F.~Iavernaro.
\newblock Energy-conserving methods for the nonlinear {S}chr\"{o}dinger
  equation.
\newblock {\em Appl. Math. Comput.}, 318:3--18, 2018.

%\bibitem{Besse15}
%C.~Besse, G.~Dujardin, and I.~Lacroix-Violet.
%\newblock High order exponential integrators for nonlinear {S}chr$\ddot{\rm
%  o}$dinger equations with application to rotating {B}ose-{E}instein
%  condensates.
%\newblock {\em SIAM J. Numer. Anal.}, 55:1387--1411, 2015.

\bibitem{BM2017sisc}
A.~Bhatt and B.~E. Moore.
\newblock {Structure-preserving exponential Runge-Kutta methods}.
\newblock {\em SIAM J. Sci. Comput.}, 39:A593--A612, 2017.

\bibitem{CLW17}
W.~Cai, H.~Li, and Y.~Wang.
\newblock Partitioned averaged vector field methods.
\newblock {\em J. Comput. Phys.}, 370:25--42, 2018.

\bibitem{CCO08fcm}
E.~Celledoni, D.~Cohen, and B.~Owren.
\newblock Symmetric exponential integrators with an application to the cubic
  {S}chr\"{o}dinger equation.
\newblock {\em Found. Comput. Math.}, 8:303--317, 2008.

%\bibitem{CLLWW19}
%W.~Chen, W.~Li, Z.~Luo, C.~Wang, and X.~Wang.
%\newblock A stabilized second order exponential time differencing multistep
%  method for thin film growth model without slope selection.
%\newblock {\em arXiv preprint arXiv:1907.02234}, 2019.

%\bibitem{CQWjsc19}
%K.~Cheng, Z.~Qiao, and C.~Wang.
%\newblock A third order exponential time differencing numerical scheme for
%  no-slope-selection epitaxial thin film model with energy stability.
%\newblock {\em J. Sci. Comput.}, 81:154--185, 2019.

\bibitem{DO11}
M.~Dahlby and B.~Owren.
\newblock A general framework for deriving integral preserving numerical
  methods for {PDE}s.
\newblock {\em SIAM J. Sci. Comput.}, 33:2318--2340, 2011.

\bibitem{DJLQsima19}
Q.~Du, L.~Ju, X.~Li, and Z.~Qiao.
\newblock Maximum principle preserving exponential time differencing schemes
  for the nonlocal {A}llen--{C}ahn equation.
\newblock {\em SIAM J. Numer. Anal.}, 57:875--898, 2019.

\bibitem{gongyz17}
Y.~Gong, Q.~Wang, Y.~Wang, and J.~Cai.
\newblock {A conservative Fourier pseudo-spectral method for the nonlinear
  Schr\"odinger equation}.
\newblock {\em J. Comput. Phys.}, 328:354--370, 2017.

\bibitem{ELW06}
E.~Hairer, C.~Lubich, and G.~Wanner.
\newblock {\em Geometric Numerical Integration: Structure-Preserving Algorithms
  for Ordinary Differential Equations}.
\newblock Springer-Verlag, Berlin, 2nd edition, 2006.

%\bibitem{Hasegawa89}
%A.~Hasegawa.
%\newblock {\em Optical Solitons in Fibers}.
%\newblock Springer-Verlag, Berlin, 1989.

\bibitem{Jiang2020jcp}
C.~Jiang, Y.~Wang, and W.~Cai.
\newblock {A linearly implicit energy-preserving exponential integrator for the
  nonlinear Klein-Gordon equation}.
\newblock {\em J. Comput. Phys.}, 419:109690, 2020.

\bibitem{JLQZmc18}
L.~Ju, X.~Li, Z.~Qiao, and H.~Zhang.
\newblock Energy stability and error estimates of exponential time differencing
  schemes for the epitaxial growth model without slope selection.
\newblock {\em Math. Comp.}, 87:1859--1885, 2018.

%\bibitem{Kobay}
%M.~Kobayashi and M.~Tsubota.
%\newblock {K}olmogorov spectrum of superfluid turbulence: {N}umerical analysis
%  of the {Gross-Pitaevskii} equation with a small-scale dissipation.
%\newblock {\em Phys. Rev. Lett.}, 94:065302, 2005.

\bibitem{Lawson1967}
J.~D. Lawson.
\newblock {Generalized Runge-Kutta processes for stable systems with large
  Lipschitz constants}.
\newblock {\em SIAM J. Numer. Anal.}, 4:372--380, 1967.

%\bibitem{LWijcm17}
%H.~Li and Y.~Wang.
%\newblock An averaged vector field {L}egendre spectral element method for the
%  nonlinear {S}chr\"{o}dinger equation.
%\newblock {\em Int. J. Comput. Math.}, 94:1196--1218, 2017.

\bibitem{lixin}
X.~Li, Y.~Gong, and L.~Zhang.
\newblock {Two novel classes of linear high-order structure-preserving schemes
  for the generalized nonlinear Schr\"odinger equation}.
\newblock {\em Appl. Math. Lett.}, 54:106273, 2020.

\bibitem{LWjcp2015}
Y.~Li and X.~Wu.
\newblock General local energy-preserving integrators for solving
  multi-symplectic {H}amiltonian {PDEs}.
\newblock {\em J. Comput. Phys.}, 301:141--166, 2015.

%\bibitem{LW15}
%Y.~Li and X.~Wu.
%\newblock General local energy-preserving integrators for solving
%  multi-symplectic {H}amiltonian {PDE}s.
%\newblock {\em J. Comput. Phys.}, 301:141--166, 2015.

\bibitem{MF01}
T.~Matsuo and D.~Furihata.
\newblock Dissipative or conservative finite-difference schemes for
  complex-valued nonlinear partial differential equations.
\newblock {\em J. Comput. Phys.}, 171:425--447, 2001.

%\bibitem{MHH19jcp}
%L.~Mei, L.~Huang, and S.~Huang.
%\newblock Exponential integrators with quadratic energy preservation for linear
%  {P}oisson systems.
%\newblock {\em J. Comput. Phys.}, 387:446--454, 2019.

\bibitem{MW17jcp}
L.~Mei and X.~Wu.
\newblock Symplectic exponential {R}unge-{K}utta methods for solving nonlinear
  {H}amiltonian systems.
\newblock {\em J. Comput. Phys.}, 338:567--584, 2017.

%\bibitem{Sanzs88}
%J.~M. Sanz-Serna.
%\newblock {Runge-Kutta} schemes for {H}amiltonian systems.
%\newblock {\em BIT}, 28:877--883, 1988.

\bibitem{ST06}
J.~Shen and T.~Tang.
\newblock {\em Spectral and High-Order Methods with Applications}.
\newblock Science Press, Beijing, 2006.

\bibitem{SXY18}
J.~Shen, J.~Xu, and J.~Yang.
\newblock The scalar auxiliary variable {(SAV)} approach for gradient flows.
\newblock {\em J. Comput. Phys.}, 353:407--416, 2018.

%\bibitem{SXY17}
%J.~Shen, J.~Xu, and J.~Yang.
%\newblock A new class of efficient and robust energy stable schemes for
%  gradient flows.
%\newblock {\em SIAM Rev.}, 61:474--506, 2019.

\bibitem{SL19jcp}
X.~Shen and M.~Leok.
\newblock Geometric exponential integrators.
\newblock {\em J. Comput. Phys.}, 382:27--42, 2019.

%\bibitem{SulemCP}
%C.~Sulem and P.~Sulem.
%\newblock {\em {The Nonlinear Schr\"odinger Equation: Self-Focusing and Wave
%  Collapse}}.
%\newblock Springer, New York, 1999.

%\bibitem{WGX13}
%T.~Wang, B.~Guo, and Q.~Xu.
%\newblock Fourth-order compact and energy conservative difference schemes for
%  the nonlinear {S}chr\"{o}dinger equation in two dimensions.
%\newblock {\em J. Comput. Phys.}, 243:382--399, 2013.

%\bibitem{WWjcp2019}
%W.~Wang and X.~Wu.
%\newblock Volume-preserving exponential integrators and their applications.
%\newblock {\em J. Comput. Phys.}, 396:867--887, 2019.

\end{thebibliography}

\end{document}